\documentclass{article}

\usepackage{amsmath}
\usepackage{amsthm}
\usepackage{amssymb}
\usepackage{color}
\usepackage{enumerate}
\usepackage[margin=2cm]{geometry}
\usepackage{graphicx}
\usepackage{mathrsfs}
\usepackage[latin1]{inputenc}
\usepackage{subfigure,xspace}
\usepackage[colorinlistoftodos]{todonotes}
\usepackage{comment}
\usepackage{ulem}
  
\newtheorem{theorem}{Theorem}[section]
\newtheorem{definition}{Definition}
\newtheorem{assumption}{Assumption}
\newtheorem{lemma}{Lemma}
\newtheorem{proposition}{Proposition}

\usepackage{amsmath,mathrsfs}
\usepackage{amssymb}

\newcommand{\myclearpage}{\clearpage}
\renewcommand{\myclearpage}{}


\def\one{\mathbf{1}}

\def\ps{p^{*}}

\def\R{\mathbb{R}}
\newcommand{\Rn}{\mathbb{R}^n}
\def\Rn{\mathbb{R}^n}
\def\S{\mathbb{S}}

\def\u{\mathbf{u}}

\def\w{\mathbf{w}}
\def\x{\mathbf{x}}
\def\y{\mathbf{y}}
\def\z{\mathbf{z}}

\def\V{\mathsf{V}}

\def\zero{\mathbf{0}}
\newcommand{\ones}{\mathbf{1}}

\newcommand{\argmin}{\operatornamewithlimits{argmin}}

\newcommand{\fLift}{\tilde{F}}

\newcommand{\Lip}{K}

\newcommand{\G}{\mathcal{G}}
\def\E{\mathcal{E}}


\newcommand{\map}[3]{#1:#2 \rightarrow #3}

\newcommand{\real}{{\mathbb{R}}}

\newcommand{\integers}{\mathbb{Z}}

\newcommand{\gradient}{\nabla}

\title{A Unified Framework for Continuous-time Unconstrained Distributed Optimization}

\author{Behrouz Touri and Bahman Gharesifard \thanks{
Behrouz Touri is with the Department of Electrical and Computer at the University of California San Diego,~\texttt{email: btouri@ucsd.edu} and 
Bahman Gharesifard is with
the Department of Electrical and Computer Engineering at the University of California Los Angeles, ~\texttt{email: gharesifard@ucla.edu}.}}

\begin{document}

\maketitle

\begin{abstract}
We introduce a class of distributed nonlinear control systems, termed as the flow-tracker dynamics, which capture phenomena where the average state is controlled by the average control input, with no individual agent has direct access to this average. The agents update their estimates of the average through a nonlinear observer. We prove that utilizing a proper gradient feedback for any distributed control system that satisfies these conditions will lead to a solution of the corresponding distributed optimization problem. We show that many of the existing algorithms for solving distributed optimization are instances of this dynamics and hence, their convergence properties can follow from its properties. In this sense, the proposed method establishes a unified framework for distributed optimization in continuous-time. Moreover, this formulation allows us to introduce a suit of new continuous-time distributed optimization algorithms by readily extending the graph-theoretic conditions under which such dynamics are convergent. 
\end{abstract}

\myclearpage

\section{Introduction}\label{sec:intro}

Many scenarios of cooperative coordination of multi-agent systems can be cast as agreement-based distributed control problems, where the decisions of the individual agents are driven by a combination of estimates obtained by averaging the states of their neighbouring agents and external control inputs, computable using local information. An important subclass of such problems which is the subject of this paper is the class of {distributed optimization problem or distributed learning} problems, where agents have access to a private function and their objective is to find a  minimizer of the sum of these function using local information. The importance of the problem stems from its applications, including distributed electricity generation and smart grids~\cite{BAR-CNH-ADD:13,ADD-STC-CNH:12,NL:13} and sensor networks~\cite{MR-RN:04}. Many coordination algorithms for distributed optimization are constructed by a combination of an information aggregation and a gradient flow of agent's individual functions~\cite{JNT-DPB-MA:86, MR-RN:04,LX-SB:06, AN-AO:09,PW-MDL:09,AN-AO-PAP:10,BJ-MR-MJ:09,MZ-SM:12, JW-NE:10,JW-NE:11, BG-JC:14-tac,burger2014polyhedral,mateos2015distributed,ren2005consensus, aybat2015asynchronous, zeng2017fast, li2019decentralized, alghunaim2020decentralized}. The external control input is hence given by the gradient of individual objective functions. We refer to this subclass the \emph{consensus-based  distributed optimization algorithms}. It is worth mentioning that there are other classes of distributed optimization protocols, for example, when the underlying functions are separable but there are local coupling constraints~\cite{niederlander2016distributed}, or when the communication constraints are cast as linear constraints and primal-dual methods are employed~\cite{wei2013distributed}. 


Distributed consensus-based convex optimization algorithms are either developed in discrete-time or in continuous-time. Although the main focus of this work is on developing continuous-time dynamics, in what follows next, we briefly review the literature on both classes of algorithms, admittedly missing some relevant references. 

\emph{Discrete-time dynamics:} Most of the available consensus-based algorithms on distributed optimization are in discrete-time. These dynamics are commonly first order, in the sense that the agents need to only carry one state, and before the work~\cite{Nedic2013}, contained the restrictive assumption that the network topology 
is either undirected, or directed but doubly stochastic, see~\cite{AN-AO:09, PW-MDL:09,AN-AO-PAP:10,BJ-MR-MJ:09,MZ-SM:12}. Another common feature in many of the discrete-time dynamics is that the stepsize is time-varying. Similar to the average consensus-dynamics~\cite{dominguez2013distributed}, the so-called push-sum protocol~\cite{kempe2003gossip} can be utilized to overcome the restrictive doubly stochastic assumption in distributed optimization. In fact, using a perturbed push-sum protocol, the work~\cite{Nedic2013}, and a large volume of references thereafter which we are unable to adequately review but point out particularly to~\cite{AN-AO-WS-17} for geometric rates and to~\cite{PR-BG-TL-BT:18-ifac,PR-BG-TL-BT:20} for random settings, provide a subgradient-push distributed optimization protocol, provably convergent to a minimizer of the sum of convex functions on any uniformly strongly connected sequence of time-varying directed graphs.  As we will describe shortly, one of the consequences of our work is the extension of this technique to continuous-time.

\emph{Continuous-time dynamics:} Unlike the discrete-time algorithms, the literature on continuous-time consensus-based distributed optimization is not large. The first continuous-time strategy for distributed optimization is introduced in~\cite{JW-NE:10,JW-NE:11}, where the graph topology is assumed to be undirected. The formal analysis of the convergence of this result and its extensions to the sum of locally Lipschitz convex functions are provided in~\cite{BG-JC:14-tac}, and various extensions are given since~\cite{mateos2016noise,kia2015distributed}. The key property of these distributed optimization dynamics is that they are variants of saddle-point dynamical systems, which renders them second-order, and that they do not rely on time-varying stepsize. For the differentiable scenarios, this class of dynamics can be extended to handle the weight-balanced scenarios~\cite{BG-JC:14-tac}, nevertheless, such extensions are no longer saddle-point dynamics. In a our previous work~\cite{BT-BG:19-tac}, we showed that saddle-point like dynamical systems can be used alongside with a push-sum dynamics to provide continuous-time dynamics that are guaranteed to asymptotically converge to a set, where the first component is the set of optimal points, without relying on the weight-balanced assumption. Even though these results allow for time-varying network settings, a key ingredient of the proof in~\cite{BT-BG:19-tac} is the assumption that the sequence of Laplacian matrices admit a common stationary distribution. Note that such a condition is much stronger than that of the state-of the art results on  continuous-time consensus dynamics, where there has been substantial advancements on the conditions under which these dynamics are convergent, particularly~\cite{hendrickx2013convergence,martin2013continuous,Bolouki2014}, where conditions such as cut-balanced and the infinite-flow graph are proved to be enough. This leaves a major gap within the literature on continuous-time dynamics for distributed optimization when compared to discrete-time dynamics.

Our main objective in writing this manuscript 
is to provide a general separation-type results for distributed optimization and computation in the sense that any distributed dynamics that poses a herd tracking behavior (as will be defined later) and mixing of information, can be turned into a distributed optimization solver. This not only provides an overarching framework for many of the existing algorithms but also provides a behavioral approach to distributed optimization. We believe this to be important, particularly, in light of a volume of recent work on distributed optimization and advancements on convergence properties of consensus-dynamics.

\subsection{Statement of contributions} 
The contributions of this paper are the followings. We provide a unified framework for distributed optimization algorithms in continuous-time by introducing a class of distributed nonlinear control systems, which we refer to them as \textit{the flow-tracker dynamics}. Each flow-tracker dynamics is associated to a sequence of directed graphs and is assumed to be distributed over the directed graph at each time, with additional two key properties: firstly, each agent is equipped with an observer that tracks the average partial-state of the agents; secondly, the average dynamics of all agents tracks the average of the external control inputs.  As our first contribution, we consider an unconstrained distributed convex optimization problem and prove that when the external inputs are given by the gradients of the individual objective functions, scaled according to an appropriate step-size, any flow-tracker dynamics is asymptotically convergent; in particular, the state of agents' observers reach consensus to an optimizer of the sum of individual objective functions. The rest of the paper investigates the implications of this result. We demonstrate that  many known consensus-based distributed optimization protocols are flow-tracker dynamics. More importantly, we construct flow-tracker dynamics that greatly extend the class of time-varying directed graphs on which distributed optimization algorithms is provably convergent to an optimizer. In particular, we demonstrate that a continuous-time version of the perturbed push-sum protocols for distributed optimization is a flow-tracker dynamics on any sequence of time-varying directed graphs which results in a so-called class $ P^* $ weakly exponentially ergodic flow. Finally, an upshot of our results is that any distributed-averaging/consensus dynamics with exponential rate of convergence can be immediately turned into an algorithm for the distributed optimization problem.

\subsection{Organization}
The organization of this paper is as follows: in Section~\ref{section:prelim}, we introduce the mathematical notations and preliminary definitions that will be used throughout this work.  In Section~\ref{section:problem-formulation}, we formally introduce the distributed optimization problem under study. We introduce \textit{flow tracker} dynamics in Section~\ref{section:system-approach} and there, present the main result of this work. The implications of this result will be discussed in Section~\ref{section:implications}. {To assist with the flow of the paper, we postpone the presentation of the technical details and the proofs of the results to Section~\ref{section:proofs}}. Finally concluding remarks and future directions will be discussed in Section~\ref{sec:conclusions}.

\myclearpage

\section{Mathematical Preliminaries}\label{section:prelim}

We introduce some of the mathematical preliminaries and notations that we will be using throughout this paper. Let $ n $ be a positive integer. We denote the all-one and all-zero vectors in $\Rn$ by $\one$ and $\zero$, respectively. We use the shorthand notation $[n]:=\{1,\ldots,n\}$.  {The main goal of this paper is to study distributed dynamics to solve an optimization problem in $\R^d$ for some $d\geq 1$. For convenience, unless mentioned otherwise, we view all vectors in $\R^d=\R^{1\times d}$ as row-vectors.} For $X\subset \real^d $, we use bold and {lower case} letters such as $\x$ to denote a matrix $\x\in X^n\subset \R^{n\times d}$ and we use $x_i\in X$ to denote the $i$th row of $\x$. For $\x\in X^n$, we let $\bar{x} \in \real^d $ to be the average of the rows of $\x$, i.e., 
$
\bar{x}=\tfrac{1}{n}\sum_{i=1}^n x_i.
$
{Note that in this case, we view $x_i$ and $\bar{\x}$ as row vectors in $\R^d$. }
For vectors $x,y\in\R^d$, we use $x\geq y$ if $x_i\geq y_i$ for all $i\in [d]$. 

We denote the identity matrix by $ I $. We denote the $ (i,j) $-entry of a matrix $ A \in \real^{n\times n} $ by $ A_{ij} $. We denote the complement of a subset $ S \subset [n] $ by $ \bar{S} $. Let $(\V,\|\cdot\|)$ be a normed vector space. For $x\in \V$ and $A\subset \V$, we let $d(x,A)=\inf_{y\in A}\|x-y\|$. We let $\S$ to be the set of $n\times n$ column-stochastic matrices and $\S_1$ to be the subset of rank-one column stochastic matrices, i.e., the set of matrices with identical (stochastic) columns. We refer to a matrix with non-positive off-diagonal entries and columns adding to zero as a \emph{generalized Laplacian matrix}, or simply as a \emph{Laplacian matrix}. If both $L$ and $L^T$ are generalized Laplacian matrices, we say that $L$ is \emph{weight-balanced}. We denote the $i$th row of an $n\times n$ matrix $ L\in \real^{n\times n} $ by $L_i$ and we denote the $j$th column by $L^j$.

For two non-empty proper subsets $S_1,S_2\subset [n]$, we let
\begin{align}\label{eqn:LSS}
L_{S_1S_2}=\sum_{i\in S_1}\sum_{j\in S_2}L_{ij}.
\end{align}
{Throughout this paper, we use the $\ell_2$-norm $\|\cdot\|$ for vectors in $\R^n$ and the resulting induced norm for matrices, which for the simplicity of notation, is denoted  by $\|\cdot\|$. As a result, for any matrix $A\in \R^{n\times n}$ and any  $\x\in \R^{n \times d}$, we have $\|A\x\|\leq \|A\|\|\x\|$. Note that for any $\x\in \R^{n \times d}$ and $u=\x v$ for some $v\in \R^{d\times 1}$ with $\|v\|=1$, by the Cauchy-Schwartz inequality, we have that
\[|u_i|=|x_i v|\leq \|x_i\|\|v\|=\|x_i\|.\]
for all $i\in[n]$. Therefore, 
\[\|u\|=\|\x v\|\leq \sqrt{n}\max_{i\in[n]}\|x_i\|,\]
 and hence, 
 \begin{align}\label{eqn:matrixnorm}
     \|\x\|:=\sup_{v\in\R^{d\times 1}:\|v\|=1}\|\x v\|\leq \sqrt{n}\max_{i\in[n]}\|x_i\|.
 \end{align}
}

\myclearpage

\section{Problem Formulation}\label{section:problem-formulation}
Consider a network of $n \in \integers^+$ agents whose communication topology is given by a sequence of time-varying directed graphs $ \{\G(t)\}_{t\geq 0}$; here $ \G(t)=([n],\mathcal{E}(t))$, where $ \mathcal{E}(t) \subseteq [n]\times [n]$ is the set of edges at time $t\geq 0$. Throughout this paper, we assume that agent $ i\in [n]$, at time $ t $, can obtain information from its out-neighbors at that time, i.e., the set of agents in $ \{j \in [n] \ | \ (i,j) \in \mathcal{E}(t) \}$. Suppose now that each agent $i\in [n]$ is equipped with a differentiable convex function $ \map{f_i}{\real^d}{\real} $, only available to this agent. The main objective is to provide a continuous-time dynamics, distributed at each time over the network, that converges to an optimizer of
\begin{align}\label{eqn:mainproblem}
  \mbox{minimize }& F(x)=\sum_{i=1}^nf_i(x).
\end{align}
{In other words, we are seeking to find a minimizer of  \eqref{eqn:mainproblem} at each node using a continuous-time dynamics where each node's update rule only depends on the information shared by the neighbors' over the underlying time-varying graph. The main idea of the many existing distributed optimization algorithms is to drive the nodes' states to the consensus subspace while utilizing a form of gradient flow dynamics at each node.  In this work, we formalize this intuition mathematically and introduce general conditions that allow us to conclude that any dynamics satisfying our general framework results in convergence to an optimal point of \eqref{eqn:mainproblem} at each node. We then show that the convergence of many existing algorithms is deduced from this result.}

We make the following assumption throughout the paper. 
\begin{assumption}[Assumption on the Objective Function]\label{assum:optimization}
We assume that $\map{f_i}{\real^d}{\real}$ is convex and differentiable with bounded gradients for all $i\in [n]$ and \begin{align}\label{eqn:nonemptyset}
X^*=\argmin_{x\in\real^d } F(x)
\end{align}
is nonempty. 
\end{assumption}

We also 
use the notations 
\[\fLift(\x):=(f_1(x_1),f_2(x_2),\ldots,f_n(x_n))^T,\] 
and 
\[\gradient \fLift(\x):=\begin{bmatrix}\gradient f_1(x_1)\\ \gradient f_2(x_2)\\\vdots\\ \gradient f_n(x_n)\end{bmatrix}\in \R^{nd},\] 
where $\x\in\R^{n\times d}$.

\myclearpage

\section{A Unified Approach to Distributed Optimization}\label{section:system-approach}
In this section, we formally present our general model and the observer-based approach to distributed optimization. As we will demonstrate later on, many of the previous works in distributed optimization are implementations of this scheme. 

We start with a general framework for dynamics which describes the evolution of individual agent's states depending on both their \emph{observations of the average state} and the \emph{(partial) states of their neighboring agents}. In particular, we consider the case where the agents estimate the average state through a dynamic observer. In its most general form, we consider the dynamics
\begin{align}\label{eqn:inputoutput}
	\dot{x}_i(t)&= p_i(t,\x(t),\w(t),u_i(t)),\cr
	\dot{w}_i(t)&=q_i(t,\x(t), \w(t)),\cr
	{y}_i(t)&=h_i(t,x_i(t),w_i(t),u_i(t)),
\end{align}
where 
\begin{enumerate}[a.]
		\item 	$\x(t)=\begin{bmatrix} x_1(t)\\x_2(t)\\\vdots\\x_n(t)\end{bmatrix}\in\real^{n\times d} $ and $ x_i(t) \in \real^{1\times d} $ denotes the internal state of agent $ i $ that is shared with its neighboring agents at time $t\geq 0$,
		\item $w_i(t) \in \real^{n_i}$ and $y_i(t) \in \real^d $,  for some $n_i\geq 0$, and
		\item $u_i(t)\in\mathbb{R}^d$ is the control input on the dynamics of agent $i\in [n]$.
\end{enumerate}	
Here, $ \map{p_i}{\real\times \real^{n\times d}\times \real^{N}\times \real^d}{\real^d} $, for $N=\sum_{i=1}^nn_i$, and $ \map{q_i}{\real\times \real^{n\times d}\times \real^{N}}{\real^{n_i}} $ are mappings with sufficient continuity properties such that there exists a continuous solution to this dynamical system. {For example, if we let $\hat{p}_i(t,\x,\w)=p_i(t,\x,\w,u_i(t))$, and assume that 
for all $i\in[n]$, $\hat{p}_i$ and $q_i$ are globally Lipschitz in the last two arguments and piece-wise continuous in the first, then the solution to \eqref{eqn:inputoutput} exists over  $[t_0,t_1]$ for any $t_0\leq t_1$ and is differentiable (see e.g.\ Theorem~3.2 in~\cite{khalil1996nonlinear})}. Finally $\map{h_i}{\real\times \real^{d}\times \real^{n_i}}\real^{d}$. It is worth pointing out that extensions of this formulation to differential inclusions is possible, but we avoid this for simplicity of presentation. The next two definitions are central to our study. 

We often say that the dynamics \eqref{eqn:inputoutput} is distributed with respect to an underlying time-varying graph process $\{([n],\E(t))\}_{t\geq 0}$ if 
 \[
 \frac{\partial p_i}{\partial x_j}=0, \quad \frac{\partial p_i}{\partial w_j}=0, \quad \frac{\partial q_i}{\partial x_j}=0, \ \mathrm{and} \ \frac{\partial q_i}{\partial w_j}=0,
 \]
for all {$(i,j)\not\in \E(t)$} and all $t\geq 0$, i.e., the update rule of agent $i$ at time $t$ only depends on information coming from its (out-)neighbors.

We next introduce the main focus of this work, which is the study of distributed flow tracker dynamics. 

\begin{definition}\label{def:flow-tracker}
	For a given graph sequence $\{\G(t)\}_{t\geq 0}$, we say that the system \eqref{eqn:inputoutput} is \emph{distributed flow tracker} {with respect to a set $Q\subseteq \R^{n\times d}\times \R^N$} if it satisfies the following properties
	\begin{enumerate}[I.]
		\item \textit{Distributed:} It is distributed with respect to $\{\G(t)\}_{t\geq 0}$.
		\item \textit{Average-input tracker}: For some constant $c_1\in \real^+$, we have
		\begin{align}\label{eqn:inputtrack}
		\dot{\bar{x}}(t)= c_1\sum_{i=1}^nu_i(t).
		\end{align}
		\item \textit{Average-state observer:}  {For all $i\in[n]$ 
		\begin{align}\label{eqn:observer}
		\|y_i(t)-\bar{x}(t)\|\leq c_2\left(\lambda^t\|\x(0)\|+\int_{0}^t\lambda^{t-s}\|\u(s)\|ds\right)
		\end{align}
		holds for all initial condition $(\x(0),\w(0))\in Q\subseteq \R^{n\times d}\times \R^N$ and all smooth control inputs $\u:\R^+\to\R^{n\times d}$}, for some  $\lambda\in[0,1)$ and $c_2\in \real^+$ (possibly dependent on $\u$). 
	\end{enumerate}
\end{definition}
{Note that whether a dynamics is a distributed flow tracker or not  depends heavily on the structure of the algorithm, i.e., the vector field defined by $p_i,q_i$, the observer $y_i(t)$ for $i\in[n]$ in \eqref{eqn:inputoutput}, and the network connectivity over time. In Section~\ref{section:implications} we discuss different algorithms 
and different connectivity conditions that result in a distributed flow tracker dynamics.}
{We note that given the local nature of available information about the objective functions, many protocols that solve~\eqref{eqn:mainproblem} rely on some type of averaging dynamics among the agents.  In fact, as we will show later, all the mentioned averaging schemes ensure the \textit{average-state observer} condition above.}

Note that due to equivalence of norms in finite dimensional spaces,  the underlying norm in \eqref{eqn:observer} does not play a role. 
Before stating the main result, we make the following standard assumption on the class of time-varying step-sizes that will be used in the subsequent results. 

\begin{assumption}[Assumption on the Step-size]\label{assum:stepsize}
We assume that $\alpha:[0,\infty)\to (0,1]$ is a non-increasing function, $\int_0^{\infty} \alpha(t)dt=\infty$, and $\int_0^{\infty} \alpha^2(t)dt<\infty$.
\end{assumption}

Our main result, which will be proved in Section~\ref{section:proofs}, is the following. 
\begin{theorem}\label{thrm:MAIN}
{Consider a dynamics~\eqref{eqn:inputoutput} that is a distributed flow tracker dynamics with respect to a set $Q$ for a given graph sequence $\{\G(t)\}_{t\geq 0}$. Then, for any initial condition $(\x(0),\w(0))\in Q$, the feedback law
\[u_i(t)= -\alpha(t)\gradient f_i(y_i(t))\] 
for~\eqref{eqn:inputoutput} solves the distributed optimization problem \eqref{eqn:mainproblem} satisfying Assumption \ref{assum:optimization}, where $\alpha(\cdot)$ satisfies the step-size Assumption \ref{assum:stepsize}, i.e., for any solution of \eqref{eqn:inputoutput} starting at $(\x(0),\w(0))\in Q$, there exists an $x^*\in X^*$ such that $\lim_{t\to\infty}y_i(t)=x^*$ for all $i\in [n]$}.
\end{theorem}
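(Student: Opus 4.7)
The plan is to combine a stochastic-approximation style Lyapunov argument for the average state $\bar x(t)$ with a disagreement estimate for $y_i(t) - \bar x(t)$, leveraging each of the three flow-tracker properties in turn. The first observation is that the bounded-gradient clause of Assumption~\ref{assum:optimization} gives, with $M := \sup_i \sup_x \|\nabla f_i(x)\|$, the \emph{a priori} bound $\|u_i(t)\| \leq M\alpha(t)$ and hence, by~\eqref{eqn:matrixnorm}, $\|\u(t)\| \leq \sqrt{n}\,M\alpha(t)$; in particular $\u$ is both globally bounded and square-integrable.

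Plugging this bound into the average-state observer inequality~\eqref{eqn:observer} yields
\[
\|y_i(t) - \bar x(t)\| \;\leq\; c_2\lambda^t\|\x(0)\| \;+\; c_2\sqrt{n}\,M\int_0^t \lambda^{t-s}\alpha(s)\,ds.
\]
Splitting the integral at $t/2$ and using that $\alpha$ is non-increasing, both pieces vanish as $t\to\infty$, so $\|y_i(t)-\bar x(t)\|\to 0$. More crucially, applying Fubini to $\int_0^\infty \alpha(t)\int_0^t \lambda^{t-s}\alpha(s)\,ds\,dt$ together with $\int_0^\infty \lambda^\tau d\tau = -1/\ln\lambda$ and $\int_0^\infty \alpha^2 < \infty$ gives
\[
\int_0^\infty \alpha(t)\|y_i(t) - \bar x(t)\|\,dt \;<\; \infty \qquad \text{for every } i\in[n].
\]

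Now fix $x^* \in X^*$ and let $V(t) = \|\bar x(t) - x^*\|^2$. The average-input tracker~\eqref{eqn:inputtrack} gives $\dot{\bar x}(t) = -c_1\alpha(t)\sum_i \nabla f_i(y_i(t))$. Writing $\bar x - x^* = (y_i - x^*) + (\bar x - y_i)$ in each summand of $\dot V$, using convexity of $f_i$ to bound $\langle y_i - x^*, \nabla f_i(y_i)\rangle \geq f_i(y_i) - f_i(x^*)$, and using $M$-Lipschitzness of $f_i$ twice (once for the cross term, once to replace $f_i(y_i)$ by $f_i(\bar x)$), one obtains
\[
\dot V(t) \;\leq\; -2c_1\alpha(t)\bigl[F(\bar x(t)) - F(x^*)\bigr] \;+\; 4c_1 M\alpha(t)\sum_i \|y_i(t) - \bar x(t)\|.
\]
The second term is integrable by the previous step, so $W(t) := V(t) - \int_0^t 4c_1 M\alpha(s)\sum_i \|y_i(s)-\bar x(s)\|ds$ is non-increasing and bounded below; hence $V(t)$ has a finite limit and $\bar x(t)$ is bounded. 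Integrating the inequality also yields $\int_0^\infty \alpha(t)[F(\bar x(t)) - F(x^*)]dt < \infty$, which with $\int_0^\infty \alpha = \infty$ forces $\liminf_{t\to\infty} F(\bar x(t)) = F(x^*)$. Thus some subsequence $\bar x(t_k) \to x^\infty \in X^*$ by continuity of $F$. Re-running the same Lyapunov estimate with $x^\infty$ in place of $x^*$ shows $\|\bar x(t) - x^\infty\|$ converges, and since a subsequence tends to zero, the whole trajectory does; combined with $\|y_i(t)-\bar x(t)\|\to 0$, this yields $\lim_{t\to\infty} y_i(t) = x^\infty$ for every $i$.

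The main obstacle I anticipate is the self-referential coupling between the observer bound and the feedback: the right-hand side of~\eqref{eqn:observer} depends on $\|\u(s)\|$, but $\u$ itself depends on the $y_i$'s through the gradient feedback. The bounded-gradient hypothesis is precisely what breaks this circle, by providing the unconditional $\|\u(t)\| = O(\alpha(t))$ estimate in the first step and thereby permitting the Fubini-style summability bound without having to establish boundedness of $\bar x(t)$ or $y_i(t)$ self-consistently first.
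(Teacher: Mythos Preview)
Your proposal is correct and follows essentially the same route as the paper's own proof: a Lyapunov argument on $\bar x(t)$ using $V=\|\bar x - x^*\|^2$, the same convexity/bounded-gradient splitting to obtain $\dot V \le -c\,\alpha(t)[F(\bar x)-F(x^*)] + C\,\alpha(t)\sum_i\|y_i-\bar x\|$, the Fubini-type estimate (the paper isolates this as an auxiliary lemma) to bound $\int_0^\infty \alpha(t)\|y_i-\bar x\|\,dt$, and the subsequence-then-rerun argument to upgrade $\liminf$ to a genuine limit. The organization differs only cosmetically (you front-load the integrability of the disagreement term, whereas the paper interleaves it with the Lyapunov computation), but the ideas and their order of dependence are the same.
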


{Note that the assumption that $\alpha(t)$ is not integrable but it is square integrable (Assumption~\ref{assum:stepsize}) is a natural assumption in the discrete-time setting and it arises in several distributed optimization and averaging results in discrete time (see e.g., \cite{AN-AO-PAP:10,sundhar2012new,
Nedic2013,aghajan2022distributed,reisizadeh2022distributed}). Due to the nature of gradient flow dynamics in continuous-time, which does not require diminishing step-size for convergence, one may wonder if the above result can be generalized for non-diminishing and constant step-size $\alpha(t)=\alpha$ for all $t$ and some $\alpha>0$. However, as we will show later (after Theorem~\ref{thrm:DO-Averaging}), this condition cannot be relaxed for the general class of distributed tracker dynamics.}

\myclearpage

\section{Implications}\label{section:implications}
In this section, we discuss the implications of the main result on a number of the existing dynamics for solving the distributed optimization problem. In particular, we show that they are instances of the distributed flow tracker dynamics and their convergence are implied by Theorem~\ref{thrm:MAIN}.

Let us start by discussing some preliminary definitions on distributed averaging dynamics which play a central role in ensuring average observer property~\eqref{eqn:observer}. Let $\{L(t)\}_{t\geq 0}$ be a sequence of matrices, where either $L(t)$, and/or $L^T(t)$, is a generalized Laplacian matrix for all $t\geq 0$. Additionally, we assume that the sequence $ \{L(t)\}_{t\geq 0}$ is measurable and locally essentially bounded, and hence the solution $\Phi(t,s)$, in the Carath\'eodory sense, to the system of ordinary differential equations 
\begin{equation}\label{eqn:LtoPhi}
\dot{\Phi}(t,s)=-L(t)\Phi(t,s),
\end{equation}
with the initial condition $\Phi(s,s)=I$, is well-defined for any $t\geq s\geq 0$, see~\cite{JC:08-csm-yo}. Clearly, $\Phi(t,s)$ is the transition matrix associated with the distributed averaging dynamics on $ \real^n $ given by
\begin{align*}
\dot{x}(t)=-L(t)x(t),
\end{align*} 
with some initial condition $ x(0)=x_0\in \real^n $.  It is important to note that in the case where $L(t)$ (respectively, $L^T(t)$)  is a generalized Laplacian matrix for all $t\geq 0$, $\Phi(t,s)$ is a column-stochastic (row-stochastic) matrix for all $t\geq s\geq 0$.

As we will discuss later, to ensure the exponential rate \eqref{eqn:observer}, it is desirable that the solutions to~\eqref{eqn:LtoPhi} satisfy some additional properties, which we outline next. 
\begin{definition}\label{def:flow}
	We say that $\{\Phi(t,s)\}$ is a \emph{stochastic flow} if $\Phi(t,s)\in \S$ for all $t\geq s\geq 0$. We say that the flow  $\{\Phi(t,s)\}$ is \emph{weakly ergodic} if for any $s\geq 0$, $\Phi(t,s)$ converges to $\S_1$, i.e.\ $\lim_{t\to\infty}d(\Phi(t,s),\S_1)=0$. We say that a flow is \emph{in class $P^*$} if $\Phi(t,s)\one\geq p^*\one$ for some $\ps>0$. Finally, we say that a flow $\{\Phi(t,s)\}$ is \emph{weakly exponentially ergodic} if $d(\Phi(t,s),\S_1)\leq a\lambda^{t-s}$ for some $\lambda\in (0,1)$ and $a>0$, and all $t\geq s\geq 0$.
\end{definition}

\subsection{Distributed Optimization using Averaging}
Let us first consider a continuous version of the distributed optimization through averaging, which was introduced in \cite{AN-AO:09} in discrete-time setting. The continuous-time version is given by
\begin{align*}
\dot{\x}(t)&= -L(t)\x(t)-\alpha(t)\gradient\fLift(\x(t)),
\end{align*}
where $L(t)$ is a ``sufficiently mixing'' weight-balanced matrix, as will be discussed shortly. Recall that $L(t)$ is weight-balanced if both $L(t)$ and $L^T(t)$ are generalized Laplacian matrices. 
Note that the above dynamics can be written as 
\begin{align}\label{eqn:distopt-averaging}
\dot{\x}(t)&= -L(t)\x(t)+\u(t),\cr
\y(t)&=\x(t),
\end{align}
with $\u(t)=-\alpha(t)\gradient \fLift(\y(t))$, {which is an instance of dynamics~\eqref{eqn:inputoutput} with $N=0$.} In fact, with enough mixing, we show that this dynamics is a distributed flow-tracker dynamics with respect to $\R^{n\times d}$ and, consequently, we have the following result, which is proved in the Appendix.  
\begin{theorem}\label{thrm:DO-Averaging}
  Suppose that $L(t)$ is a weight-balanced matrix for all $t\geq 0$ such that the resulting flow $\Phi(t,s)$, defined by \eqref{eqn:PhitPhis}, is weakly exponentially ergodic. Then for any $\alpha(\cdot)$ satisfying the step-size Assumption \eqref{assum:stepsize}, the dynamics \eqref{eqn:distopt-averaging} is a distributed flow tracker dynamics {with respect to $Q=\R^{n\times d}$} and hence, $\lim_{t\to\infty}x_i(t)=x^*$ for some $x^*\in X^*$ for any distributed optimization problem \eqref{eqn:mainproblem} satisfying Assumption~\ref{assum:optimization}.
\end{theorem}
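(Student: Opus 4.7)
My plan is to verify the three properties of Definition~\ref{def:flow-tracker} for the dynamics~\eqref{eqn:distopt-averaging}, treating it as a special case of~\eqref{eqn:inputoutput} with no observer state (i.e.\ $n_i = 0$), output $y_i(t) = x_i(t)$, and control $u_i(t) = -\alpha(t)\gradient f_i(x_i(t))$. The claim will then follow immediately from Theorem~\ref{thrm:MAIN} with $Q = \R^{n\times d}$. Property~(I), the distributed condition, is built in by construction: $L(t)$ has the sparsity pattern of $\G(t)$, and $\gradient f_i(x_i)$ depends only on $x_i$.

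For the average-input tracker property~(II), I would invoke the weight-balanced hypothesis. Since $L(t)$ is a generalized Laplacian, its columns sum to zero, i.e.\ $\one^T L(t) = 0$. Multiplying~\eqref{eqn:distopt-averaging} from the left by $\tfrac{1}{n}\one^T$ gives
\[
\dot{\bar{x}}(t) = -\tfrac{1}{n}\one^T L(t)\x(t) + \tfrac{1}{n}\one^T \u(t) = \tfrac{1}{n}\sum_{i=1}^n u_i(t),
\]
so~\eqref{eqn:inputtrack} holds with $c_1 = 1/n$.

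The substantive step is the observer property~(III). Using variation-of-constants with $\Phi(t,s)$ as in~\eqref{eqn:LtoPhi}, one has $\x(t) = \Phi(t,0)\x(0) + \int_0^t \Phi(t,s)\u(s)\,ds$. Because $L(t)$ is weight-balanced, $L^T(t)$ is also a generalized Laplacian, forcing $L(t)\one = \zero$ and hence $\Phi(t,s)\one = \one$; combined with column-stochasticity this makes $\Phi(t,s)$ doubly stochastic. Setting $J := \tfrac{1}{n}\one\one^T$, this yields $J\Phi(t,s) = J$ and therefore
\[
(I - J)\x(t) = (\Phi(t,0) - J)\x(0) + \int_0^t (\Phi(t,s) - J)\u(s)\,ds.
\]
The $i$th row of $(I-J)\x(t)$ equals $x_i(t) - \bar{x}(t)$, and since the Euclidean norm of any row of a matrix is bounded by its induced operator norm, the observer bound~\eqref{eqn:observer} reduces to an exponential estimate on $\|\Phi(t,s) - J\|$.

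The main obstacle is that weak exponential ergodicity only controls the distance from $\Phi(t,s)$ to the \emph{set} $\S_1$, rather than to $J$ specifically. To close this gap, I would fix $\Phi_1(t,s) = p\one^T \in \S_1$ satisfying $\|\Phi(t,s) - \Phi_1(t,s)\| \leq a\lambda^{t-s}$ for a probability vector $p$ depending on $t,s$. Applying this matrix inequality to the vector $\one$ and using $\Phi(t,s)\one = \one$ gives $\|\one - np\| = \|(\Phi(t,s) - \Phi_1(t,s))\one\| \leq a\lambda^{t-s}\sqrt{n}$, whence $\|p - \one/n\| \leq a\lambda^{t-s}/\sqrt{n}$. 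Since $\|\Phi_1(t,s) - J\| = \|(p - \one/n)\one^T\| = \sqrt{n}\,\|p - \one/n\|$, a triangle inequality yields $\|\Phi(t,s) - J\| \leq 2a\lambda^{t-s}$, establishing~\eqref{eqn:observer} with $c_2 = 2a$. With all three properties in place, Theorem~\ref{thrm:MAIN} delivers the claimed convergence.
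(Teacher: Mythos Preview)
Your proof is correct and follows essentially the same route as the paper: verify the three flow-tracker properties and invoke Theorem~\ref{thrm:MAIN}, using weight-balancedness to get doubly stochastic $\Phi(t,s)$ and the variation-of-constants formula to reduce~\eqref{eqn:observer} to a bound on $\|\Phi(t,s)-\tfrac{1}{n}\one\one^T\|$. The paper's proof simply asserts that this last bound ``follows from the fact that the flow $\Phi(t,s)$ is an exponentially ergodic flow,'' whereas you correctly observe that weak exponential ergodicity only controls the distance to the \emph{set} $\S_1$, and you supply the missing argument (apply the nearest $p\one^T\in\S_1$ to $\one$, use row-stochasticity $\Phi(t,s)\one=\one$ to force $p$ close to $\one/n$, then triangle-inequality) to obtain $\|\Phi(t,s)-J\|\le 2a\lambda^{t-s}$; this is a genuine detail the paper omits, and your treatment is more complete on this point.
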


{Similar to discrete-time variation~\eqref{eqn:distopt-averaging}, as shown in~\cite{qu2017harnessing}, we show that we cannot have  convergence to an optimal point without diminishing step-size $\alpha(t)$ in~\eqref{eqn:distopt-averaging}. For this, consider the simple scenario with two agents on a connected time-invarying undirected graph and $d=1$ with 
\[L(t)=\begin{bmatrix}1&-1\\-1&1
\end{bmatrix}\]
and cost functions 
\[
f_1(x)=\frac{1}{2}(x-1)^2 \quad \mathrm{and}  \quad f_2(x)=\frac{1}{2}(x+1)^2.
\]
First, note that the minimizer of $ (f_1+f_2) $ is at $ x=0 $. 
For a constant rate $ \alpha >0 $, consider the above averaging-based distributed optimization dynamics, which can be written as 
\begin{align*}
    \dot{x}(t)=    \underbrace{\begin{bmatrix}
        -1-\alpha & 1\\
        1 & -1-\alpha
    \end{bmatrix}}_{A}x(t)
    +    \alpha\begin{bmatrix}
        1\\-1
    \end{bmatrix}.
\end{align*}
The solution then is given by 
\[
x(t)=e^{At}x(0)+\int_0^te^{A(t-\tau)}\begin{bmatrix}
    1\\-1
\end{bmatrix}\alpha d\tau.
\]
Note that $ A $ is negative-definite with eigenvalues $\lambda_1=-\alpha$ and $\lambda_2=-2-\alpha$. In particular, 
the vector $(1,-1)^T$ is an eigenvector of $ A $ with the corresponding eigenvalue of $(-2-\alpha)$. Therefore,
\[
x(t)=e^{At}x(0)+\frac{\alpha}{2+\alpha}\begin{bmatrix}
    1\\-1
\end{bmatrix}.
\]

As $ t\rightarrow \infty $, the trajectory convergences to $\frac{\alpha}{2+\alpha}[1,-1]^T$, which is not equal to $x^*=0$, the minimizer of $f_1+f_2$.}

\subsection{Distributed Optimization using Push-Sum}
We now introduce a continuous-time variation of the (discrete-time) push-sum based optimization algorithm, studied for time-invariant scenarios in~\cite{Tsianos1,Tsianos2,TsianosThesis} and later extended to time-varying graphs in~\cite{Nedic2013}. The continuous-time version is given by
 \begin{align*}
 \dot{\x}(t)&= -L(t)\x(t) - \alpha(t) \gradient\fLift(\y(t)),\cr
 \dot{w}(t)&=-L(t)w(t),\cr
 y_i(t)&=\frac{x_i(t)}{w_i(t)},
 \end{align*}
where $w_i(0)=1$ for all $i\in [n]$. As in the discrete-time case, the advantage of this dynamics  to \eqref{eqn:distopt-averaging} is that $L(t)$ need not be weight-balanced.

Again, we can view this dynamics as the following input-output dynamics \begin{align}\label{eqn:DO-push}
 \dot{\x}(t)&= -L(t)\x(t)+\u(t),\cr
 \dot{w}(t)&=-L(t)w(t),\cr
 y_i(t)&=\frac{x_i(t)}{w_i(t)},
 \end{align}
 with $N=n$ and the feedback $\u(t)=-\alpha(t) \gradient \fLift(\y(t))$ and similar to Theorem~\ref{thrm:DO-Averaging}, if the sequence $\{L(t)\}$ is ``sufficiently mixing'', this dynamics is a flow-tracker dynamics.

\begin{theorem}\label{thrm:DO-push}
Consider the dynamics~\eqref{eqn:DO-push} and suppose that $\{\Phi(t,s)\}$ defined by \eqref{eqn:PhitPhis} is a class $P^*$ weakly exponentially ergodic flow. Then, the dynamics \eqref{eqn:DO-push} is a flow tracker dynamics {with respect to $Q=\R^{n\times d}\times \{\ones\}$}.  Moreover, for the feedback $\u(t)=-\alpha(t) \gradient\fLift(\y(t))$ with $ \alpha(\cdot) $ satisfying Assumption~\ref{assum:stepsize}, we have $\lim_{t\to\infty}y_i(t)=x^*$ for some $x^*\in X^*$, for all $i\in [n]$, for a distributed optimization problem \eqref{eqn:mainproblem} satisfying Assumption~\eqref{assum:optimization}. \end{theorem}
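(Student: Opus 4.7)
The plan is to verify that~\eqref{eqn:DO-push} is a distributed flow tracker with respect to the initial-condition set $Q=\R^{n\times d}\times\{\ones\}$, and then invoke Theorem~\ref{thrm:MAIN} to conclude. The distributed property is immediate from the sparsity pattern of $L(t)$. For the average-input tracker condition, I would simply use that $L(t)$ has zero column sums, i.e.\ $\ones^T L(t)=\zero^T$, so $n\dot{\bar{x}}(t)=\ones^T\dot{\x}(t)=\ones^T\u(t)$, giving~\eqref{eqn:inputtrack} with $c_1=1/n$.

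The core of the argument is the average-state observer estimate~\eqref{eqn:observer}. Since $w(0)=\ones$ and $\dot{w}(t)=-L(t)w(t)$, we have $w(t)=\Phi(t,0)\ones$, and the class $P^*$ hypothesis provides the uniform lower bound $w_i(t)\geq \ps>0$. Writing
\[
y_i(t)-\bar{x}(t)=\frac{x_i(t)-w_i(t)\bar{x}(t)}{w_i(t)},
\]
it suffices to bound the \emph{reweighted error} $e(t):=\x(t)-w(t)\bar{x}(t)\in\R^{n\times d}$, in which the $i$-th row of $w(t)\bar{x}(t)$ is $w_i(t)\bar{x}(t)$. A direct differentiation, using $\dot{\bar{x}}(t)=\bar{u}(t)$ from the already-established input tracker, shows that $e$ satisfies the linear ODE $\dot{e}(t)=-L(t)e(t)+\u(t)-w(t)\bar{u}(t)$, and variation of constants then gives
\[
e(t)=\Phi(t,0)e(0)+\int_0^t\Phi(t,s)\bigl(\u(s)-w(s)\bar{u}(s)\bigr)ds.
\]

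The key observation, and the step I would want to flag, is that both forcing terms lie in $\ker\ones^T$: directly, $\ones^T e(0)=\ones^T\x(0)-n\bar{x}(0)=0$; and since $\Phi$ is column-stochastic, $\ones^T w(s)=\ones^T\Phi(s,0)\ones=n$, whence $\ones^T(\u(s)-w(s)\bar{u}(s))=n\bar{u}(s)-n\bar{u}(s)=0$. Weak exponential ergodicity provides, for each $t\geq s$, some rank-one stochastic matrix $M_{t,s}=\pi(t,s)\ones^T$ with $\|\Phi(t,s)-M_{t,s}\|\leq a\lambda^{t-s}$; since $M_{t,s}v=\pi(t,s)(\ones^T v)=0$ whenever $\ones^T v=0$, this upgrades to the contraction estimate $\|\Phi(t,s)v\|\leq a\lambda^{t-s}\|v\|$ on $\ker\ones^T$. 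Combining this with the elementary bounds $\|e(0)\|\leq 2\|\x(0)\|$ and $\|\u(s)-w(s)\bar{u}(s)\|\leq (1+\sqrt n)\|\u(s)\|$ (the latter from $\|w(s)\|_1=n$ together with~\eqref{eqn:matrixnorm}), one obtains $\|e(t)\|\leq c\bigl(\lambda^t\|\x(0)\|+\int_0^t\lambda^{t-s}\|\u(s)\|ds\bigr)$ for a constant $c=c(a,n)$. Since $\|e_i(t)\|\leq\|e(t)\|$ and $w_i(t)\geq\ps$, dividing through delivers~\eqref{eqn:observer}, and the convergence statement follows from Theorem~\ref{thrm:MAIN} applied to the feedback $u_i(t)=-\alpha(t)\gradient f_i(y_i(t))$.

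The main obstacle is choosing the right error variable: the natural quantity $x_i(t)/w_i(t)-\bar{x}(t)$ is nonlinear and does not mesh with the Laplacian flow, whereas the reweighted error $e(t)=\x(t)-w(t)\bar{x}(t)$ obeys a linear equation driven by $L(t)$ whose initial data and forcing both live in the subspace on which the rank-one limit of $\Phi$ acts trivially. This is the structural reason the class $P^*$ hypothesis enters only at the very end, as a uniformly positive denominator converting the bound on $e(t)$ into the required bound on $y_i(t)-\bar{x}(t)$.
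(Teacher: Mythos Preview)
Your argument is correct and takes a genuinely different route from the paper. The paper's proof (Proposition~\ref{prop:observer}) never forms the error $e(t)=\x(t)-w(t)\bar{x}(t)$ as a single object; instead it exploits the semigroup identity $\Phi(t,s)=\Phi(t,0)-R(t,s)(\Phi(s,0)-I)$, where $R(t,s)=\Phi(t,s)-\pi(t,s)\ones^T$ is the residual from $\S_1$, to approximate both $x_i(t)$ and $w_i(t)\bar{x}(t)$ by the common intermediate quantity $\Phi_i(t,0)\bigl(\x(0)+\int_0^t\u(s)\,ds\bigr)$, and then controls the two discrepancies separately before dividing by $w_i(t)\geq\ps$. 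Your approach is structurally cleaner: once you recognize that $e(t)$ itself obeys the same Laplacian flow with initial data and forcing lying in $\ker\ones^T$, the exponential contraction of $\Phi(t,s)$ on that invariant subspace yields the estimate in a single stroke, and the roles of column-stochasticity (which keeps $\ker\ones^T$ invariant) and the class~$P^*$ hypothesis (which enters only as the positive denominator at the end) are sharply separated. The paper's more computational route produces a slightly tighter explicit constant, $\tfrac{1}{\ps}\bigl(\lambda^t\|\x(0)\|+3\int_0^t\lambda^{t-s}\|\u(s)\|\,ds\bigr)$, whereas yours inherits a factor of order $\sqrt{n}$ from the bound on $\|w(s)\bar{u}(s)\|$; for verifying the flow-tracker property and invoking Theorem~\ref{thrm:MAIN}, either is sufficient.
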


\subsection{Distributed Optimization using Saddle-Point Dynamics}
Another approach to solve the distributed optimization problem \eqref{eqn:mainproblem} is through saddle-point like dynamics, originally established in~\cite{JW-NE:10,BG-JC:14-tac}; this dynamics is given by
\begin{align*}
	\dot{\x}(t)&=-aL(t)\x(t)-L(t)\w(t)-\alpha(t)\nabla \tilde{F}(\x(t))\cr
	\dot{\w}(t)&=L(t)\x(t),
\end{align*}
where $ \x(t),\w(t) \in \real^{n\times d} $ for all $ t \geq 0 $ and some $a>0$. 

This dynamics can be viewed as the following input-output dynamics
\begin{align}\label{eqn:DO-SP}
	\dot{\x}(t)&=-aL(t)\x(t)-L(t)\w(t)+\u(t)\cr
	\dot{\w}(t)&=L(t)\x(t)\cr 
	\y(t)&=\x(t),
\end{align}
with the feedback $\u(t)=-\alpha(t)\nabla \tilde{F}(\x(t))$ and $N=nd$.

We show that for sufficiently mixing $\{L(t)\}$, this dynamics is a flow tracker and again Theorem~\ref{thrm:MAIN} applies here.

\begin{theorem}\label{thrm:DO-SP}
	Let $\{L(t)\}$ be a sequence of weight-balanced Laplacian matrices such that
	\begin{align}\label{eq:minc-beta}
	\int_{t}^{t+T}L_{minc}(\tau)d\tau\geq \beta,
	\end{align}
	for some $\beta>0$, some $T>0$, and all $t\geq 0$, where 
	\[L_{minc}(\tau)=\min_{\emptyset\not= S\subset[m]}L_{S\bar{S}}(\tau),\]
	is the minimum-cut at time $\tau$. Then, the saddle-point dynamics \eqref{eqn:DO-SP} is a flow-tracker dynamics {with respect to~$Q=\R^{n\times d}\times \R^{n\times d}$}  for $a\geq 5$. As a result, for a distributed optimization problem \eqref{eqn:mainproblem} satisfying Assumption \eqref{assum:optimization}, with the feedback $\u(t)\in -\alpha(t)\nabla  \bar{F}(\x(t))$,  where the step-size  $\alpha(t)$ satisfies Assumption~\ref{assum:stepsize}, for all initial conditions $\x(0),\w(0)\in\R^{n\times d}$ we have $\lim_{t\to\infty}x_i(t)=x^*$ for all $i\in[n]$ and some $x^*\in X^*$.  
\end{theorem}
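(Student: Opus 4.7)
The plan is to show that~\eqref{eqn:DO-SP} is a distributed flow tracker dynamics with respect to $Q = \R^{n\times d}\times \R^{n\times d}$; once this is established, the convergence conclusion follows from Theorem~\ref{thrm:MAIN}. The ``distributed'' property is inherited directly from the sparsity pattern of $L(t)$. For the average-input tracker property~\eqref{eqn:inputtrack}, I would premultiply the $\x$-equation by $\tfrac{1}{n}\one^T$: since $L(t)$ is weight-balanced, $\one^T L(t) = 0$, so the $-aL\x$ and $-L\w$ contributions both vanish and $\dot{\bar{x}}(t) = \tfrac{1}{n}\one^T \u(t)$, giving $c_1 = 1/n$. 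Applied to the $\w$-equation, the same identity shows that $\bar{w}(t) \equiv \bar{w}(0)$, a useful invariant for the subsequent analysis.

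The bulk of the proof lies in verifying the average-state observer property~\eqref{eqn:observer}, which (since $\y=\x$ here) amounts to bounding the disagreement $\tilde{\x}(t) := \x(t) - \one\bar{x}(t)$. With the projection $\proj = I - \tfrac{1}{n}\one\one^T$, setting $\tilde{\x} := \proj\x$ and $\tilde{\w} := \proj\w$, and using $L(t)\one = 0$, the pair $(\tilde{\x},\tilde{\w})$ evolves on the disagreement subspace under the same second-order form driven by $\proj\u$. The task thus reduces to proving an exponential input-to-state stability bound of the form
\[
\|\tilde{\x}(t)\| \;\leq\; c\,\lambda^t\,(\|\x(0)\|+\|\w(0)\|) + c\int_0^t \lambda^{t-s}\|\u(s)\|\,ds,
\]
for some $\lambda\in(0,1)$, which implies~\eqref{eqn:observer} after absorbing the $\w(0)$ dependence into the $\u$-dependent constant $c_2$ allowed by Definition~\ref{def:flow-tracker} (the initial condition being fixed per trajectory).

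The hard part is constructing a Lyapunov function that simultaneously tolerates (i) the antisymmetric part of $L(t)$ arising when $L(t)$ is only weight-balanced rather than symmetric, (ii) the PI-/saddle-point-type coupling between $\tilde{\x}$ and $\tilde{\w}$, and (iii) the possible instantaneous degeneracy of $L(t)$ on the disagreement subspace, with connectivity provided only through the integrated min-cut condition~\eqref{eq:minc-beta}. My candidate would be
\[
V(\tilde{\x},\tilde{\w}) \;=\; \tfrac{a}{2}\|\tilde{\x}\|^2 + \tfrac{1}{2}\|\tilde{\w}\|^2 + \varepsilon\,\tr(\tilde{\x}^T \tilde{\w}),
\]
positive definite on the disagreement subspace for sufficiently small $\varepsilon>0$. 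Differentiating along the unforced dynamics produces the ``good'' dissipative terms $-a\,\tilde{\x}^T L_s(t)\tilde{\x}$ and $-\varepsilon\,\tilde{\w}^T L_s(t)\tilde{\w}$ (with $L_s = (L+L^T)/2$), together with cross terms coming from the antisymmetric part $(L-L^T)/2$ and from $\varepsilon\,\tr(\tilde{\x}^T L(t)\tilde{\x})$. The specific threshold $a\geq 5$ in the statement appears tailored to let completion-of-squares and Young's-inequality estimates dominate these cross terms, yielding $\dot V \le -\gamma\bigl(\tilde{\x}^T L_s(t)\tilde{\x} + \varepsilon\,\tilde{\w}^T L_s(t)\tilde{\w}\bigr)$ modulo an input-driven contribution again controllable by Young's inequality.

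To turn this instantaneous dissipation into uniform exponential decay despite $L_s(t)$ being possibly degenerate pointwise, I would invoke a spectral-gap estimate of the form $\tilde{\x}^T L_s(t)\tilde{\x} \geq \kappa\,L_{minc}(t)\|\tilde{\x}\|^2$ for $\tilde{\x}\perp\one$ (and similarly for $\tilde{\w}$) valid for weight-balanced Laplacians. Integrating $\dot V$ over windows of length $T$ and using the persistent-excitation condition~\eqref{eq:minc-beta} then yields a uniform multiplicative decrease of $V$ from window to window, which translates into geometric decay of $V$ and hence exponential decay of $\tilde{\x}$. A standard variation-of-constants argument folds the forcing $\u$ into the convolution term of~\eqref{eqn:observer}. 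With the flow-tracker property in hand, Theorem~\ref{thrm:MAIN} delivers the claimed optimization convergence.
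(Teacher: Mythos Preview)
The paper does not actually supply a proof of Theorem~\ref{thrm:DO-SP}: Section~\ref{section:proofs} proves Theorems~\ref{thrm:MAIN}, \ref{thrm:DO-Averaging}, and~\ref{thrm:DO-push} only, and the nearest analogue, Proposition~\ref{prop:tracking} for the push-sum/saddle-point hybrid, is quoted from the authors' earlier work~\cite{BT-BG:19-tac} rather than reproved. So there is no in-paper argument to benchmark your proposal against; what can be said is that your high-level strategy---verify the three flow-tracker properties and invoke Theorem~\ref{thrm:MAIN}---is exactly the route the paper intends, and the cross-term Lyapunov function you write down is the same style of device that underlies Proposition~\ref{prop:tracking} and that explains the threshold $a\ge 5$.

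Two technical points deserve care. First, your plan to ``absorb the $\|\w(0)\|$ dependence into the $\u$-dependent constant $c_2$'' is in tension with Definition~\ref{def:flow-tracker} as written: the bound~\eqref{eqn:observer} is required to hold for all $(\x(0),\w(0))\in Q$ with $c_2$ depending only on $\u$, and here $Q=\R^{n\times d}\times\R^{n\times d}$. This does not obstruct the downstream use of Theorem~\ref{thrm:MAIN}, whose proof is carried out trajectory by trajectory, but it means the dynamics is a flow tracker only in a slightly relaxed sense; you should say so explicitly rather than hide it in $c_2$. Second, and more substantively, the step ``$\tilde{\x}^T L_s(t)\tilde{\x}\ge \kappa\,L_{minc}(t)\|\tilde{\x}\|^2$ on $\one^\perp$'' is not a known inequality: Cheeger-type bounds give $\lambda_2(L_s)$ controlled \emph{quadratically} by the conductance, so a pointwise linear lower bound in $L_{minc}(t)$ is generally false, and without it the window-integral argument does not convert~\eqref{eq:minc-beta} into a uniform contraction of $V$. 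The argument in~\cite{BT-BG:19-tac} that yields the rate $\lambda=e^{-2a\pi_{\min}\gamma/n^2}$ in Proposition~\ref{prop:tracking} does not pass through such a spectral-gap estimate; it works directly with cut quantities and the structure of the quadratic form $\sum_{i<j}(-(L_s)_{ij})(x_i-x_j)^2$. To close the gap you should either reproduce that cut-based contraction argument (adapted to the weight-balanced, integrated-min-cut hypothesis here) or, alternatively, show that~\eqref{eq:minc-beta} forces weak exponential ergodicity of the associated second-order flow and then run a variation-of-constants estimate in the spirit of the proof of Theorem~\ref{thrm:DO-Averaging}.
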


\subsection{Modified Saddle-Point Dynamics with Push-sum}

In the recent work~\cite{BT-BG:19-tac}, we considered the following continuous-time dynamics for solving the distributed optimization problem, where for simplicity, we have assumed that the  the state of each agent is a scalar
 \begin{align}\label{eqn:saddlepointpush}
  \dot{\x}(t)&= -a L(t)\x(t)-L(t)\z(t)+\u(t) ,\cr
  \dot{\z}(t)&=L(t)\x(t)\cr
  \dot{v}(t)&=-L(t)v(t),\cr
  y_i(t)&=\frac{x_i(t)}{v_i(t)},
\end{align}
where $ t \geq 0 $, $ \x(t)=(x_1(t),\ldots, x_n(t))^T\in \real^{n} $, $ \z(t)=(z_1(t),\ldots, z_n(t))^T\in \real^{n} $,  $ x_i(t) ,z_i(t) \in \real $ and $v_i(t)\in \R$ are the states of  $i$th agent,
$y_i(t)$ is the agent $i$'s estimate of the solution to~\eqref{eqn:mainproblem}, and 
$\u(t)\in\real^{n} $ is sufficiently well-behaved. We assume here that the entries of $ L(t) $ are uniformly bounded over time. Note that if we let $w(t):=(z^T(t),v^T(t))^T$, then the dynamics \eqref{eqn:saddlepointpush} can be viewed as an instance of the distributed input-output dynamics \eqref{eqn:inputoutput}. The next result is a restatement of~\cite[Proposition~3.3]{BT-BG:19-tac}. 
\begin{proposition}\label{prop:tracking}
Consider the dynamic~\eqref{eqn:inputoutput}, for an arbitrary $ (\x(0),\z(0)) \in \real^{n}\times \real^{n} $ and let $v_i(0)=1$ for all $i\in[n]$. Suppose that the sequence of Laplacian matrices $\{L(t)\}$ admits a common stationary distribution $\pi>0$  and has a minimum cut $\gamma>0$.
Then there exists a time $t_0\geq 0$ such that for some $c_2\in \R$, any $i\in [n]$, and $t\geq t_0$ we have that
\begin{align}\label{eqn:observer-SPP}
		\|y_i(t)-\bar{\x}(t)\|\leq c_2(\lambda^t\|\x(0)\|+\int_{0}^t\lambda^{t-\tau}\|\u(\tau)\|d\tau),
\end{align}
where $a\geq 5$, $\lambda=e^{-\frac{2a\pi_{\min}\gamma}{n^2}}\in (0,1)$, and $ \bar{\x}(t)=\frac{1}{n} \sum_{i=1}^nx_i(t) $. 
\end{proposition}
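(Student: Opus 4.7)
The proposition is explicitly a restatement of~\cite[Proposition~3.3]{BT-BG:19-tac}, so the cleanest proof is to invoke that result directly. To make the underlying argument transparent, I would organize a self-contained sketch around three tasks: (i) controlling the push-sum weights $v_i(t)$ from below, (ii) deriving a clean evolution for an appropriate disagreement variable, and (iii) running a Lyapunov argument adapted to the saddle-point structure on the disagreement subspace.

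First I would analyze $v(t)$. Because each $L(t)$ is a generalized Laplacian with $\ones^T L(t)=0$, the transition matrix $\Phi(t,s)$ of $\dot v=-L(t)v$ is column-stochastic and $\ones^T v(t)=\ones^T v(0)=n$ for all $t$. The common stationary distribution $\pi>0$ gives $\pi^T\Phi(t,s)=\pi^T$, and together with the minimum-cut assumption $\gamma>0$ this yields weak exponential ergodicity of $\{\Phi(t,s)\}$ at rate $\lambda=e^{-2a\pi_{\min}\gamma/n^2}$; in particular there exist $t_0\geq 0$ and $v_*>0$ such that $v_i(t)\geq v_*$ for all $t\geq t_0$ and all $i\in[n]$. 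This step is where the specific exponential rate appearing in the target bound is born.

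Next I would introduce the push-sum-weighted disagreement $e_i(t)=x_i(t)-v_i(t)\bar{\x}(t)$, so that $y_i(t)-\bar{\x}(t)=e_i(t)/v_i(t)$; by the lower bound of Step~1, any bound on $\|e(t)\|$ translates directly into the claimed bound on $|y_i(t)-\bar{\x}(t)|$ for $t\geq t_0$. A direct calculation using $\dot{\bar{\x}}=\tfrac{1}{n}\ones^T\u$ produces a linear, time-varying system for the augmented state $(e,\z)$ driven by $\u$ and by a forcing term proportional to $L(t)v(t)$, which itself decays by Step~1. One checks that $\ones^T e(t)\equiv 0$, so the whole analysis takes place on the invariant subspace orthogonal to $\ones$, where the effective generator is the saddle-point generator associated to $L(t)$.

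The main obstacle, and the technical heart of~\cite{BT-BG:19-tac}, is the third step: producing a coercive Lyapunov function $V(e,\z)$ for this augmented system that (a) is compatible with the non-symmetric weighting induced by $\pi$, (b) handles the saddle-point coupling between $e$ and $\z$, and (c) dissipates at a rate proportional to $\pi_{\min}\gamma/n^2$ precisely when $a\geq 5$. A weighted quadratic of the form $V(e,\z)=\tfrac12 e^TPe+\tfrac12\z^TQ\z+e^TM\z$, with $P,Q,M$ built from the stationary distribution $\pi$, should yield a differential inequality $\dot V\leq -\mu V+C\|\u\|^2$; the threshold $a\geq 5$ is exactly what is needed to complete the square on the cross-terms so that $\mu$ has the claimed order. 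A standard comparison-lemma argument followed by a square-root and a Cauchy--Schwarz step converts this into a bound $\|e(t)\|\leq c\bigl(\lambda^t\|\x(0)\|+\int_0^t\lambda^{t-\tau}\|\u(\tau)\|d\tau\bigr)$, and dividing through by the lower bound $v_*$ absorbs the remaining factors into $c_2$, completing~\eqref{eqn:observer-SPP} for $t\geq t_0$.
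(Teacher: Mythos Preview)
The paper does not supply its own proof of this proposition; it simply introduces it as a restatement of \cite[Proposition~3.3]{BT-BG:19-tac} and defers entirely to that reference. Your proposal takes exactly this route---invoking the cited result---and then volunteers a self-contained sketch that goes beyond what the paper itself provides, so there is nothing to reconcile.
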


Note that the inequality~\eqref{eqn:observer-SPP} along with $\dot{\bar{\x}}(t)=\bar{\u}(t)$ are  the exact requirements for generating a flow-tracker according to Definition~\ref{def:flow-tracker}. Therefore, the main result of the mentioned paper~\cite[Thoerem~2.5]{BT-BG:19-tac} follows as a corollary of Theorem~\ref{thrm:MAIN}.

\begin{theorem}\label{thrm:mainresult-SPP}
 Suppose that the sequence of Laplacian matrices $\{L(t)\}$ admits a common stationary distribution $\pi>0$  and has a minimum cut $\gamma>0$. {Then the dynamics~\eqref{eqn:saddlepointpush} is a distributed flow tracker dynamics with respect to~$Q=R^n\times (\R^n\times \{\ones\})$. As a result if Assumptions~\ref{assum:optimization} and~\ref{assum:stepsize} hold, for any initial conditions~$\x(0),\z(0)\in\R^n$ and $v(0)=\ones$, we have $\lim_{t\to\infty}y_i(t)=x^*$, for all $i\in [n]$ and some $x^*\in X^*$, for the solutions of \eqref{assum:optimization} with $\u(t)=-\alpha(t)\nabla \tilde{F}(\x(t))$.}
\end{theorem}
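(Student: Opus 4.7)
The plan is to derive Theorem~\ref{thrm:mainresult-SPP} as a direct corollary of Theorem~\ref{thrm:MAIN} by verifying that~\eqref{eqn:saddlepointpush} is a distributed flow tracker with respect to $Q=\R^n\times(\R^n\times\{\ones\})$. In the notation of~\eqref{eqn:inputoutput}, the auxiliary state is $w(t)=(z^T(t),v^T(t))^T\in\R^{2n}$, so $N=2n$; the set $Q$ encodes the required push-sum initialization $v(0)=\ones$ while leaving $(\x(0),\z(0))$ arbitrary. Once the three defining properties of Definition~\ref{def:flow-tracker} are checked, applying Theorem~\ref{thrm:MAIN} with $u_i(t)=-\alpha(t)\nabla f_i(y_i(t))$ delivers $\lim_{t\to\infty}y_i(t)=x^*\in X^*$.

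For property~I (distributedness), I would inspect each vector field in~\eqref{eqn:saddlepointpush}: the $i$th rows of $-aL(t)\x(t)-L(t)\z(t)$, $L(t)\x(t)$, and $-L(t)v(t)$ only couple to coordinates $j$ with $L_{ij}(t)\ne 0$, which are precisely the neighbors of $i$ in $\G(t)$; the output $y_i(t)=x_i(t)/v_i(t)$ is purely local. For property~II (average-input tracker), since $L(t)$ is a generalized Laplacian, $\one^TL(t)=\zero^T$, and averaging the first equation of~\eqref{eqn:saddlepointpush} gives
\[\dot{\bar{\x}}(t)=\tfrac{1}{n}\one^T\dot{\x}(t)=-\tfrac{a}{n}\one^TL(t)\x(t)-\tfrac{1}{n}\one^TL(t)\z(t)+\tfrac{1}{n}\one^T\u(t)=\tfrac{1}{n}\sum_{i=1}^nu_i(t),\]
so~\eqref{eqn:inputtrack} holds with $c_1=1/n$.

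Property~III (average-state observer) is essentially a restatement of Proposition~\ref{prop:tracking}: under the common stationary distribution $\pi>0$ and positive minimum cut $\gamma$, that proposition provides the tracking inequality~\eqref{eqn:observer-SPP} with $\lambda=e^{-2a\pi_{\min}\gamma/n^2}\in(0,1)$, which is precisely the form demanded by~\eqref{eqn:observer}. The only subtlety is that~\eqref{eqn:observer-SPP} is asserted only for $t\geq t_0$ for some $t_0\geq 0$, whereas~\eqref{eqn:observer} must hold for all $t\geq 0$. I would close this gap by noting that on the compact interval $[0,t_0]$ the state is continuous and $v_i(t)$ stays uniformly bounded away from zero (a consequence of the class $P^*$ property implicit in $\pi>0$), so $\|y_i(t)-\bar{\x}(t)\|$ is bounded on $[0,t_0]$ by a constant depending only on $\|\x(0)\|$ and $\sup_{s\in[0,t_0]}\|\u(s)\|$; enlarging the constant $c_2$ to absorb this contribution, which Definition~\ref{def:flow-tracker} explicitly permits to depend on $\u$, yields~\eqref{eqn:observer} on all of $[0,\infty)$.

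The main obstacle lives in property~III, but that heavy lifting has already been done in Proposition~\ref{prop:tracking}; what remains for me is primarily bookkeeping to cast~\eqref{eqn:saddlepointpush} into the input-output form~\eqref{eqn:inputoutput}, verify the Laplacian-induced averaging identities, and handle the short initial transient. With all three conditions confirmed on $Q$, Theorem~\ref{thrm:MAIN} completes the argument.
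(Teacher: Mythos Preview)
Your proposal is correct and follows exactly the route the paper takes: the paper's argument for Theorem~\ref{thrm:mainresult-SPP} is the sentence preceding it, namely that the observer bound of Proposition~\ref{prop:tracking} together with $\dot{\bar{\x}}(t)=\bar{\u}(t)$ yield the flow-tracker properties, whence Theorem~\ref{thrm:MAIN} applies. Your write-up is in fact more careful than the paper's, since you explicitly verify distributedness and address the short-time transient $[0,t_0]$ that the paper passes over silently.
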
 

We finish this section with a remark. One of the objectives of this work is to systematically decouple the role of the mixing of information and distributed optimization. In the above results, mixing of information is ensured by assuming conditions on exponential ergodicity of a generalized Laplacian process $\{L(t)\}$.   While our focus in this work is not on developing sharpest conditions on the information exchange mechanisms that ensure  exponentially ergodic flows, there are many results available in the literature that serve this purpose, including~\cite{Touri2014,Hendrickx2013,Bolouki2014,martin2013continuous}, that could simply be inserted here to obtain convergence results for distributed optimization dynamics.

\myclearpage

\section{Technical Details and Proofs}\label{section:proofs}

In this section, we provide the proofs of our results.
\subsection{Proof of Theorem \ref{thrm:MAIN}}
We start by proving the main result. 
\begin{proof}
Let $x^*\in X^*$ and consider the Lyapunov candidate $V:\R^d\to\R$ given by
	\begin{align}\label{eqn:V}
	V(x)=\frac{1}{2}\|x-x^*\|^2.
	\end{align}
	
    The function $V$ is smooth.  {Then, consider a solution of a distributed flow tracker dynamics~\eqref{eqn:inputoutput} with respect to $Q$, started at $(\x(0),\w(0))\in Q$}. We examine the  derivative of this function along such a trajectory of the average dynamics which satisfies \eqref{eqn:inputtrack}. 
	We have that
	\begin{align}\label{eqn:Vdot}
	\dot{V}(\bar{x}(t))=-\alpha(t)\sum_{i=1}^n \nabla f_i(y_i(t))(\bar{x}(t)-x^*),
	\end{align} 
	where by proper scaling of the step-size sequence $\alpha(t)$, we assume that $c_1=1$ in \eqref{eqn:inputtrack}.	
	Next, we have
	\begin{align}\label{eqn:Vintermediate}  
	\dot{V}(\bar{x}(t)) =&-\alpha(t)\sum_{i=1}^n\nabla f_i(y_i(t))(\bar{x}(t)-y_i(t))+\alpha(t)\sum_{i=1}^n\nabla f_i(y_i(t))(x^*-y_i(t))\\
	\leq& \alpha(t)\Lip\sum_{i=1}^n\|\bar{x}(t)-y_i(t)\|-\alpha(t)(\sum_{i=1}^nf_i(y_i(t))-F(x^*)),
	\end{align}
	where the first term in the above inequality follows from the fact that the gradient of $f_i$s are bounded by $\Lip$ and application of Cauchy-Schwartz inequality,  and the second inequality follows from convexity of $f_i$'s as: \[\nabla f_i(y_i(t))(x^*-y_i(t))\leq f_i(x^*)-f_i(y_i(t)).\]
	By the bounded subgradient property of $f_i$s, we have \[\|f_i(y_i(t))-f_i(\bar{x})\|\leq \Lip\|y_i(t)-\bar{x}(t)\|.\] Using this in \eqref{eqn:Vintermediate}, we conclude that
	\begin{align}\label{eqn:finalA}
	\dot{V}(\bar{x}(t)) &\leq 2\Lip\sum_{i=1}^n\alpha(t)\|\bar{x}(t)-y_i(t)\|-\alpha(t)(F(\bar{x}(t))-F(x^*)). 
	\end{align}
    Integrating both sides of the above inequality over $[0,t]$ interval for $t>0$, we have
	\begin{align}\label{eqn:finalV}
	V(\bar{x}(t))-V(\bar{x}(0))\leq 2\Lip\sum_{i=1}^n\int_{0}^t\alpha(s)\|\bar{x}(s)-y_i(s)\|ds\cr
	-\int_{0}^t\alpha(s)(F(\bar{x}(s))-F(x^*))ds.
	\end{align}
	We next show that $V(\bar{x}(t))$ converges. For convenience, let 
	\begin{align*}
	h(t)&=2\Lip\sum_{i=1}^n\int_{0}^t\alpha(s)\|\bar{x}(s)-y_i(s)\|ds.
	\end{align*}
	Then, using the average-state observer property \eqref{eqn:observer}, we have{ 
	\begin{align*}
	h(t)&\leq 2\Lip c_2\sum_{i=1}^n\int_{0}^t\alpha(s)
	    \left(\lambda^s\|x(0)\|+\int_0^s\lambda^{s-\tau}\|\u(\tau)\|d\tau\right) ds\cr 
	&\leq  2\Lip c_2\sum_{i=1}^n\int_{0}^t\alpha(s)
	    \left(\lambda^s\|x(0)\|+\sqrt{n}\Lip\int_0^s\lambda^{s-\tau}\alpha(\tau)d\tau\right) ds,
	\end{align*}
	where the last inequality follows from \eqref{eqn:matrixnorm} and the fact that \[\|\u(s)\|=\sqrt{n}\max_{i\in [n]}\|\alpha(s)\nabla f_i(y_i(s))\|\leq \sqrt{n}\Lip\alpha(s).\]}
	But 
	\[\int_{0}^t\alpha(s)
	    \lambda^s\|x(0)\|ds\leq \alpha(0)\|x(0)\|\int_{0}^t
	    \lambda^sds\leq \frac{\alpha(0)\|x(0)\|}{1-\lambda},\]
	as $\alpha(s)$ is a non-increasing function and $\lambda\in (0,1)$. Also, by Lemma~\ref{lemma:aux}, we have: 
	\begin{align*}
	    \int_{0}^t\alpha(s)\int_0^s\lambda^{s-\tau}\alpha(\tau)d\tau ds\leq \frac{1-\lambda}{\log \lambda }\int_{0}^\infty \alpha^2(s)ds. 
	\end{align*}
	
	Combining the above two observations, we conclude that $h(t)$ is uniformly bounded, i.e., there exists some $\gamma>0$ such that $h(t)\leq \gamma$ for all $t\geq 0$. Note that $h(t)$ is a non-decreasing function and hence,  $\lim_{t\to\infty}h(t)$ exists and $\lim_{t\to\infty}h(t)\leq \gamma$. By \eqref{eqn:finalV}, and the fact that $F(x)\geq F(x^*)$, for any $x\in\R^d$, we have that
	\begin{align*}
	V(\bar{x}(t_2))-V(\bar{x}(t_1))\leq h(t_2)-h(t_1),
	\end{align*}
	for any $0\leq t_1<t_2$. This implies that $V(\bar{x}(t))$ and hence, $\bar{x}(t)$ is bounded and
	\begin{align*}
	\limsup_{t\to\infty}V(\bar{x}(t))-\liminf_{t\to\infty}V(\bar{x}(t)) \leq \limsup_{t\to\infty}h(t)-\liminf_{t\to\infty}h(t)=0,
	\end{align*}
	and hence, $\lim_{t\to\infty}V(\bar{x}(t))$ 
	exists.
	
	Also, since $V(\bar{x}(t))\geq 0$ and $F(\bar{x}(t))\geq F(x^*)$, using~\eqref{eqn:finalV} we conclude that
	\begin{align}\label{eqn:finalV-2}
	0\leq \int_{0}^\infty\alpha(s)(F(\bar{x}(s))-F(x^*))ds \leq \gamma-V(\bar{x}(0))<\infty,
	\end{align}
	which implies that $\liminf_{t\to\infty}F(\bar{x}(t))=F(x^*)$. In other words, there exists a subsequence $x_{t_k}=x(t_k)$ of $\{x(t)\}$ that converges to a point $\hat{x}\in X^*$ (note that $\bar{x}(t)$ is bounded). Since the above arguments hold for any $x^*\in X$, we may repeat the argument for $\hat{x}$ and conclude that, in this case, convergence of $V(\bar{x}(t))$ implies that $\lim_{t\to\infty}\bar{x}(t)=\hat{x}$. Finally, note that $\lim_{t\to\infty} \|y_i(t)-\bar{x}(t)\|=0$ for any $i\in [m]$ and hence, $\lim_{t\to\infty}y_i(t)=\hat{x}$, concluding the result. 	
\end{proof}
\subsection{Proof Theorem~\ref{thrm:DO-Averaging}}
We now move on to our next proof. 
\begin{proof}
Since $L(t)$ is a weight-balanced matrix for any $t\geq 0$, the dynamics~\eqref{eqn:distopt-averaging} satisfies the average input tracking property \eqref{eqn:inputtrack}. To show that~\eqref{eqn:inputoutput} holds, we have
\begin{align}\label{eqn:DO-AV-twodyn}
\x(t)&=\Phi(t,0)\x(0)+\int_{0}^t\Phi(t,s)\u(s)ds,\cr 
{\bar{x}}(t)&=\bar{x}(0)+\int_{0}^t\bar{u}(s)ds,
\end{align}
where the second equality holds due to the fact that $L(t)$ is a weight balanced matrix and hence, $\Phi(t,s)$ is a doubly-stochastic matrix for any $t\geq s\geq 0$. By subtracting the two equations, and using the triangle inequality, we get
\begin{align*}
\|x_i(t)-\bar{x}(t)\|&\leq \|\x(t)-\ones{\bar{x}}(t) \|\cr 
&\leq \|\Phi(t,0)-\frac{1}{m}\ones\ones^T\|\|\x(0)\|+\int_{0}^t\|\Phi(t,s)-\frac{1}{m}\ones\ones^T\|\|\u(s)\|ds,\cr 
&\leq c(\lambda^t\|\x(0)\|+\int_{0}^t\lambda^{t-s}\|\u(s)\|ds),
\end{align*}
for some $c>0$, where the last inequality follows from the fact that the flow $\Phi(t,s)$ is an exponentially ergodic flow.

Therefore, dynamics~\eqref{eqn:distopt-averaging} satisfying the assumptions of Theorem~\ref{thrm:DO-Averaging} is a  distributed flow tracker. As a result, if the optimization problem~\eqref{eqn:mainproblem} satisfies the assumption of Theorem~\ref{thrm:MAIN}, $\lim_{t\to\infty}x_i(t)=x^*$ for all $i\in [n]$ and some $x^*\in X^*$.
\end{proof}

\subsection{Proof of Theorem~\ref{thrm:DO-push}}

Our first result demonstrates that each output $y_i(t)$, $ i \in[n] $, is an observer for $\bar{x}(t)$.
\begin{proposition}\label{prop:observer}
	Consider the dynamics~\eqref{eqn:DO-push} with the initial condition $(\x(0),\w(0))$, where $ \w(0)=\one $ and $ \x(0)\in \real^{n\times d} $. Suppose that $\{\Phi(t,s)\}_{t\geq s\geq 0}$ is a weakly exponentially ergodic flow generated by~\eqref{eqn:LtoPhi} which is also in class $P^*$. Then
	\begin{align}\label{eqn:observer3}
	\|y_i(t)-\bar{x}(t)\|\leq \frac{1}{\ps}(\lambda^t\|\x(0)\|+3\int_{0}^t\lambda^{t-s}\|\u(s)\|ds),
	\end{align}
	where $ \lambda \in (0,1) $ and $ p^* >0 $. 
\end{proposition}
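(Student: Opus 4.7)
The central idea is to study the observer-error matrix $z_i(t) := x_i(t) - w_i(t)\bar{x}(t)$. By the definition of the push-sum output,
\[
y_i(t) - \bar{x}(t) \;=\; \frac{x_i(t) - w_i(t)\bar{x}(t)}{w_i(t)} \;=\; \frac{z_i(t)}{w_i(t)},
\]
and the class $P^*$ assumption guarantees $w_i(t) = [\Phi(t,0)\ones]_i \geq p^* > 0$ for all $t\geq 0$, so $\|y_i(t) - \bar{x}(t)\| \leq \|z_i(t)\|/p^*$. The task therefore reduces to bounding the row-wise norm of $\z(t)$.

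Next, I would derive a closed-form dynamical system for $\z$. Direct differentiation using the dynamics of $\x$, of $w$, and the identity $\dot{\bar{x}}(t) = \bar u(t)$ (which follows since $\ones^T L(t) = 0$) yields
\[
\dot{\z}(t) = -L(t)\z(t) + \u(t) - w(t)\bar u(t), \qquad \z(0) = \x(0) - \ones\bar x(0).
\]
Crucially, $\ones^T \z(t) = 0$ for all $t\geq 0$: the initial condition satisfies it by construction, and the forcing obeys $\ones^T[\u(s) - w(s)\bar u(s)] = n\bar u(s) - (\ones^T w(s))\bar u(s) = 0$ because $\ones^T w(s) = \ones^T\Phi(s,0)\ones = n$ by column-stochasticity of $\Phi(s,0)$. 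Hence $\z(t)$ evolves within the mean-zero subspace $\{\xi \in \R^{n\times d} : \ones^T \xi = 0\}$.

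I would then apply the variation-of-constants formula
\[
\z(t) = \Phi(t,0)\z(0) + \int_0^t \Phi(t,s)\bigl[\u(s) - w(s)\bar u(s)\bigr]\,ds
\]
and invoke weak exponential ergodicity. Writing $\Phi(t,s) = R(t,s) + E(t,s)$ with $R(t,s) \in \mathbb{S}_1$ and $\|E(t,s)\| \leq a\lambda^{t-s}$, note that $R(t,s) = p(t,s)\ones^T$ for some probability vector $p(t,s)$, so $R(t,s)\xi = p(t,s)(\ones^T\xi) = 0$ whenever $\ones^T\xi = 0$. Consequently $\|\Phi(t,s)\xi\| = \|E(t,s)\xi\| \leq a\lambda^{t-s}\|\xi\|$ on the invariant subspace. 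Applying this to the initial term (with $\|\z(0)\| = \|(I-\tfrac{1}{n}\ones\ones^T)\x(0)\| \leq \|\x(0)\|$) and to the forcing term gives the exponentially weighted bound on $\|\z(t)\|$.

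The last step is to estimate $\|\u(s) - w(s)\bar u(s)\|$ by a constant multiple of $\|\u(s)\|$, using $\bar u(s) = \tfrac{1}{n}\ones^T \u(s)$ together with $\ones^T w(s) = n$ and $w(s) \geq p^*\ones$; dividing through by $w_i(t)\geq p^*$ then yields the claimed inequality. I expect the main obstacle to lie precisely in this final bookkeeping step: the factor $3$ in the proposition is universal, so one cannot afford any $n$-dependent prefactor. This forces one to exploit tightly the fact that the forcing already sits in the mean-zero subspace (so only the contractive part of $\Phi$ acts on the outer-product term $w(s)\bar u(s)$) and to pair $w(s)$ against $\bar u(s)$ with operator-norm-tight identities rather than crude row-wise $\ell_2$ estimates; the exponential-ergodicity constant $a$ can then be absorbed into an effective rate by slightly enlarging $\lambda$, producing the stated bound with prefactor $3/p^*$ on the integral term.
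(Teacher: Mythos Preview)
Your reduction to the error matrix $\z(t)=\x(t)-w(t)\bar x(t)$, the verification that $\ones^T\z(t)\equiv 0$, and the variation-of-constants formula are all correct, and this line of argument does yield an observer inequality of the form~\eqref{eqn:observer} with \emph{some} constant $c_2$. However, it does not prove the proposition as stated, and the obstacle you flag in the last paragraph is real and not just bookkeeping.

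The difficulty is that $\|\u(s)-w(s)\bar u(s)\|$ cannot be bounded by a dimension-free multiple of $\|\u(s)\|$. Since $\Phi(s,0)$ is only column-stochastic, all you know about $w(s)=\Phi(s,0)\ones$ is $\ones^Tw(s)=n$ and $w_i(s)\ge p^*$; an individual entry may be of order $n$, so $\|w(s)\|$ can be of order $n$, giving $\|w(s)\bar u(s)\|\le \|w(s)\|\,\|\bar u(s)\|\le \sqrt{n}\,\|\u(s)\|$. Your proposed fix---letting only the contractive part of $\Phi(t,s)$ act on $w(s)\bar u(s)$---does not help: $w(s)\bar u(s)$ by itself is \emph{not} mean-zero (only the full forcing is), and in fact $\Phi(t,s)w(s)=\Phi(t,0)\ones=w(t)$, so $\Phi(t,s)w(s)\bar u(s)=w(t)\bar u(s)$ with no decay at all. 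Thus any estimate that passes through $\|\u(s)-w(s)\bar u(s)\|$ will carry a $\sqrt n$ in front of the integral.

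The paper avoids this by a different device. Rather than bounding the forcing, it uses the semigroup identity
\[
\Phi(t,s)=\Phi(t,0)-R(t,s)\bigl(\Phi(s,0)-I\bigr),\qquad R(t,s):=\Phi(t,s)-\pi(t,s)\ones^T,
\]
to replace every $\Phi(t,s)$ in the Duhamel integral by the \emph{common} operator $\Phi(t,0)$ plus a remainder of norm $\le 2\lambda^{t-s}$. This gives $x_i(t)\approx \Phi_i(t,0)\bigl(\x(0)+\int_0^t\u(s)\,ds\bigr)$. On the other side, column-stochasticity gives $w_i(t)\bar x(t)=\Phi_i(t,0)\,\tfrac1n\ones\ones^T\bigl(\x(0)+\int_0^t\u(s)\,ds\bigr)$, so the two expressions share the same outer factor $\Phi_i(t,0)$ acting on the same cumulative input. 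Their difference is then governed by
\[
\Phi_i(t,0)\Bigl(I-\tfrac1n\ones\ones^T\Bigr)=R_i(t,0)\Bigl(I-\tfrac1n\ones\ones^T\Bigr),
\]
because the rank-one part $\pi_i(t,0)\ones^T$ is annihilated by $I-\tfrac1n\ones\ones^T$. Since $\|I-\tfrac1n\ones\ones^T\|=1$, this operator has norm $\le\lambda^t$ independently of $n$, and the constant $3$ drops out after combining the two remainder terms. In short: the paper compares $x_i(t)$ and $w_i(t)\bar x(t)$ through a common factorization via $\Phi(t,0)$, rather than tracking a single error ODE whose forcing norm is large.
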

\begin{proof}
	For any $t\geq s\geq 0$, let $\pi(t,s)\one^T$ be the projection of $\Phi(t,s)$ on the set of rank-one stochastic matrices $\S_1$. Note that $\pi(t,s)$ is a stochastic vector. Let us denote the residual $R(t,s)=\Phi(t,s)-\pi(t,s)\one^T$. Because of the exponential ergodic property of the flow, it follows that $\|R(t,s)\|\leq \lambda^{t-s}$.
	Due to the semigroup property of the flow, we have
	\begin{align*}
	\Phi(t,0)=\Phi(t,s)\Phi(s,0)&=\left(\pi(t,s)\one^T+R(t,s)\right)\Phi(s,0)\cr
	&=\pi(t,s)\one^T+R(t,s)\Phi(s,0),
	\end{align*}
	where the last equality follows from the fact that $\Phi(s,0)\in \S$. As a result, we have
	\begin{align}\label{eqn:PhitPhis}
	\Phi(t,s)&=\pi(t,s)\one^T+R(t,s),\cr 
	&=\Phi(t,0)-R(t,s)(\Phi(s,0)-I).
	\end{align}
	The last equality shows that if $\Phi(t,s)$ is sufficiently close to $\S_1$ (i.e.\ $\|R(t,s)\|$ is small), then $\Phi(t,0)$ is a good approximation for $\Phi(t,s)$. This fact is central to our later development. 
	
	As a result of \eqref{eqn:PhitPhis}, we have
	\begin{align}
	\x(t)&=\Phi(t,0)\x(0)+\int_{0}^t\Phi(t,s)\u(s)ds,\cr
	&=\Phi(t,0)\left(\x(0)+\int_{0}^t\u(s)ds\right),\cr
	&+\int_{0}^tR(t,s)(I-\Phi(s,0))\u(s)ds.
	\end{align}
	Therefore,
	\begin{align}\label{eqn:approx1}
	\|x_i(t)-\Phi_i(t,0)\left(\x(0)+\int_{0}^t\u(s)ds\right)\|\leq 2\int_{0}^t\lambda^{t-s}\|\u(s)\|ds.
	\end{align}
	On the other hand, since $\one^T\Phi(t,s)=\one^T$, we have
	\begin{align}\label{eqn:approx2}
	w_i(t)\frac{1}{m}(\one^T\x(t))&= \left(\Phi_i(t,0)\one\right)\frac{1}{m}(\one^T\x(t))\nonumber \\
	&= (\Phi_i(t,0)(\frac{1}{m}\one\one^T))(\x(0)+\int_{0}^t{\u}(s)ds).
	\end{align}
	But,
	\begin{align}\label{eqn:approx3}
	\|\Phi_i(t,0)-\Phi_i(t,0)(\frac{1}{m}\one\one^T)\|
	&=\|\pi_i(t,0)\one^T+R_i(t,0)-(\pi_i(t,0)\one^T+R_i(t,s))(\frac{1}{m}\one\one^T)\|,\cr
	&=\|R_i(t,0)(I-\frac{1}{m}\one\one^T)\|\leq 2\lambda^t.
	\end{align}
	
	Combining \eqref{eqn:approx1}, \eqref{eqn:approx2}, and \eqref{eqn:approx3} and using the triangle inequality, we have
	\begin{align*}
	\|x_i(t)&-w_i(t)\frac{1}{m}(\one^Tx(t))\|\\
	&\leq
	\|x_i(t)-\Phi_i(t,0)(\x(0)+\int_{0}^t\u(s)ds))\|\\
	&+
	\|\Phi_i(t,0)(\x(0)+\int_{0}^t\u(s)ds))-w_i(t)\frac{1}{m}(\one^Tx(t))\|,\cr
	&\leq 2\int_{0}^t\lambda^{t-s}\|\u(s)\|ds\\
	&+ \|\Phi_i(t,0)-\Phi_i(t,0)(\frac{1}{m}\one\one^T)\|\|\x(0)+\int_{0}^t\u(s)ds)\|,\cr
	&\leq \lambda^t\|x(0)\|+3\int_{0}^t\lambda^{t-s}\|\u(s)\|ds,
	\end{align*}
	where the last inequality follows form the fact that $\lambda^t\leq \lambda^{t-s}$ for $s\leq t$ and $\lambda\leq 1$. Finally, we have
	\begin{align*}
	\|y_i(t)&-\frac{1}{m}(\one^Tx(t))\|,\cr
	&=\|\frac{x_i(t)}{w_i(t)}-\frac{1}{m}(\one^Tx(t))\|,\cr
	&= \|\frac{x_i(t)-w_i(t)\frac{1}{m}(\one^Tx(t))}{w_i(t)}\|,\cr
	&\leq \frac{1}{\ps}\left(\lambda^t\|x(0)\|+3\int_{0}^t\lambda^{t-s}\|\u(s)\|ds\right),
	\end{align*}
	which finishes the proof. 
\end{proof}

\myclearpage

\section{Conclusions and future work}\label{sec:conclusions}
In this paper, we have provided an observer-based controller for a class of distributed control problems, where the agents estimate the average behaviour of the system and implement a controller that depends on their estimates of the average state. When the class $P^*$ weakly exponentially ergodic flow property holds, we have provided an upper bound for the difference of the agents' estimates and the true average. We have demonstrated that many existing distributed convex optimization algorithms are subclasses of this dynamics and, hence, their convergence properties can be concluded using our proposed dynamics.

\bibliographystyle{abbrv}
\bibliography{alias,bib,Main}

\appendix\section*{}\label{sec:appendix}
The following lemma is used in the proof of one of our main results. 
\begin{lemma}\label{lemma:aux}
  Let $\alpha(t),\beta(t):\R^+\to\R^+$ be functions such that $\alpha(t)$ is non-increasing and $\langle\alpha,\beta\rangle=\int_{0}^\infty\alpha(t)\beta(t)dt<\infty$. Then,
  \[\int_{0}^\infty\alpha(t)\int_0^t\lambda^{t-s}\beta(s)dsdt\leq \frac{(1-\lambda)}{|\log \lambda|}\langle\alpha,\beta\rangle<\infty,\]
  for $\lambda\in(0,1)$.
\end{lemma}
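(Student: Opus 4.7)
The plan is to apply Tonelli's theorem to swap the order of the two integrals and then exploit the monotonicity of $\alpha$ to absorb the exponential kernel against the decaying factor $\lambda^{u}$. Since the integrand $\alpha(t)\lambda^{t-s}\beta(s)$ is nonnegative on the triangle $\{0\le s\le t\}$, Tonelli first gives
\[
\int_{0}^{\infty}\alpha(t)\int_{0}^{t}\lambda^{t-s}\beta(s)\,ds\,dt
 \;=\; \int_{0}^{\infty}\beta(s)\int_{s}^{\infty}\alpha(t)\lambda^{t-s}\,dt\,ds,
\]
and after the substitution $u=t-s$ in the inner integral, the problem reduces to bounding $I(s):=\int_{0}^{\infty}\alpha(s+u)\lambda^{u}\,du$ by a suitable multiple of $\alpha(s)$ that is uniform in $s$.

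To extract a closed-form constant $C(\lambda)$ of the form in the statement, I would partition $[0,\infty)$ into unit intervals $[k,k+1]$ for $k\in\mathbb{Z}_{\ge 0}$. On each such interval the non-increasing property of $\alpha$ yields $\alpha(s+u)\le \alpha(s+k)\le \alpha(s)$, while $\int_{k}^{k+1}\lambda^{u}\,du = \lambda^{k}(1-\lambda)/|\log\lambda|$. Summing the resulting geometric series $\sum_{k\ge 0}\lambda^{k}=1/(1-\lambda)$ gives an explicit bound $I(s)\le C(\lambda)\alpha(s)$. Plugging back into the Tonelli identity and pairing with $\beta$ then produces $\int_{0}^{\infty}\beta(s)I(s)\,ds \le C(\lambda)\langle\alpha,\beta\rangle$, and the hypothesis $\langle\alpha,\beta\rangle<\infty$ yields finiteness on the right-hand side.

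The strategy is essentially forced---Fubini by the triangular domain, and monotonicity of $\alpha$ as the only hypothesis available for pulling the inner integral out---so the main obstacle is bookkeeping the constant tightly enough to land on $(1-\lambda)/|\log\lambda|$ as claimed, rather than the coarser one-line majorization $I(s)\le \alpha(s)\int_{0}^{\infty}\lambda^{u}\,du=\alpha(s)/|\log\lambda|$. Keeping the discretization into unit intervals and holding off on bounding $\alpha(s+k)$ by $\alpha(s)$ until the last step is what preserves the factor $(1-\lambda)$ promised by the lemma statement; managing that exact constant is the only delicate part of the argument.
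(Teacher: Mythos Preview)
Your core strategy---Tonelli on the triangle, then exploit monotonicity of $\alpha$---is exactly what the paper does. The paper changes variables to $\eta=t-s$ first and swaps to $\int_0^\infty \lambda^\eta \int_\eta^\infty \alpha(t)\beta(t-\eta)\,dt\,d\eta$, then uses $\alpha(t)\le\alpha(t-\eta)$ to recognize the inner integral as $\langle\alpha,\beta\rangle$; you swap first and then substitute $u=t-s$ to reduce to bounding $I(s)=\int_0^\infty\alpha(s+u)\lambda^u\,du$. These are the same argument with the two variables interchanged, and your ``coarse'' bound $I(s)\le\alpha(s)/|\log\lambda|$ is precisely the paper's estimate.

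The gap is in your attempt to sharpen the constant to $(1-\lambda)/|\log\lambda|$. Your discretization does not do this: summing
\[
\sum_{k\ge 0}\alpha(s+k)\,\lambda^k\,\frac{1-\lambda}{|\log\lambda|}
\;\le\;
\alpha(s)\,\frac{1-\lambda}{|\log\lambda|}\sum_{k\ge 0}\lambda^k
\;=\;\frac{\alpha(s)}{|\log\lambda|},
\]
so the geometric series exactly cancels the factor $1-\lambda$ and you land back on the coarse constant. Holding off on $\alpha(s+k)\le\alpha(s)$ does not help either, since $\int_0^\infty\beta(s)\alpha(s+k)\,ds$ cannot be controlled by $\langle\alpha,\beta\rangle$ without that very inequality.

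In fact the constant $(1-\lambda)/|\log\lambda|$ stated in the lemma is not attainable: take $\alpha\equiv 1$ and any integrable $\beta\ge 0$ not a.e.\ zero; then the left side equals $\langle\alpha,\beta\rangle/|\log\lambda|$, strictly larger than $(1-\lambda)\langle\alpha,\beta\rangle/|\log\lambda|$. The paper's own proof, read carefully, only establishes the bound with constant $1/|\log\lambda|$ (the $(1-\lambda)$ in its final display is a slip, since $\int_0^\infty\lambda^\eta\,d\eta=1/|\log\lambda|$). So your ``coarse'' one-line majorization is already the correct and complete argument; the refinement you propose is neither needed nor achievable.
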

\begin{proof}
  Note that $\int_0^t\lambda^{t-s}\beta(s)ds=\int_{0}^t\lambda^\eta\beta(t-\eta)d\eta$. Since $\alpha(t),\beta(t),\lambda^t$ are all non-negative functions, from Tonelli's theorem~\cite{WR:87} and by changing the order of the integrals, it follows that
  \begin{align}\label{eqn:lemmaintest2}
    \int_0^\infty\alpha(t)\int_{0}^t\lambda^\eta\beta(t-\eta)d\eta&=
    \int_0^\infty\int_{0}^t\lambda^\eta\alpha(t)\beta(t-\eta)d\eta dt\cr
    &=\int_0^\infty\lambda^\eta\int_{\eta}^\infty\alpha(t)\beta(t-\eta)dtd\eta.
  \end{align}
On the other hand, since $\alpha$ is non-increasing, we have
  \begin{align}\label{eqn:lemmaintest1}
    \int_0^\infty\lambda^\eta\int_{\eta}^\infty\alpha(t)\beta(t&-\eta)dtd\eta\cr
    &\leq \int_0^\infty\lambda^\eta\int_{\eta}^\infty\alpha(t-\eta)\beta(t-\eta)dtd\eta\cr
    &=\frac{1-\lambda}{|\log \lambda|} \langle\alpha,\beta\rangle.
  \end{align}
  Combining \eqref{eqn:lemmaintest1} and \eqref{eqn:lemmaintest2} gives the desired result.
\end{proof}

\end{document}